\def\A{{{A}}}
\def\B{{{B}}}
\def\C{{{C}}}
\def\K{{{K}}}
\def\U{{{U}}}
\def\P{{{P}}}
\def\J{{{J}}}
\def\M{{{M}}}
\def\a{{{a}}}
\def\b{{{b}}}
\def\f{{{f}}}
\def\g{{{g}}}
\def\u{{{u}}}
\def\v{{{v}}}
\def\x{{{x}}}
\def\y{{{y}}}
\def\q{{{q}}}
\def\phii{{\varphi}}
\def\alphaa{{\alpha}}
\def\betaa{{\beta}}
\def\gammaa{{\gamma}}
\def\psii{{\psi}}
\def\etaa{{\eta}}
\def\R{\mathbb{R}}
\def\N{\mathbb{N}}
\def\m{m}
\def\n{n}
\def\one{\mathbb{1}}
\def\e{\mathrm{e}}
\renewcommand{\epsilon}{\varepsilon}
\newcommand{\norm}[1]{\Vert#1\Vert}
\newcommand{\abs}[1]{\vert#1\vert}
\DeclareMathOperator{\diag}{Diag}
\DeclareMathOperator{\spann}{span}
\pgfplotsset{compat=newest}
\pgfplotsset{plot coordinates/math parser=false}
\pgfplotsset{every axis legend/.append style={legend cell align=left,align=left,draw=none,font=\footnotesize}}
\pgfplotsset{every axis label/.append style={font=\footnotesize}}
\pgfplotsset{every tick label/.append style={font=\tiny}}
\title{A Sinkhorn--Newton method for entropic optimal transport}
\author{
    Christoph Brauer\thanks{%
        Institute of Analysis and Algebra,
        TU Braunschweig,
        38092 Braunschweig, Germany
    (\email{ch.brauer@tu-braunschweig.de}, \email{d.lorenz@tu-braunschweig.de})}
    \and
    Christian Clason\thanks{%
        Faculty of Mathematics,
        University Duisburg-Essen,
        45117 Essen, Germany
    (\email{christian.clason@uni-due.de})}
    \and
    Dirk Lorenz\footnotemark[1]
    \and
    Benedikt Wirth\thanks{%
        Institute for Numerical and Applied Mathematics,
        University of Münster,
        Einsteinstraße 62, 48149 Münster, Germany
    (\email{benedikt.wirth@uni-muenster.de})}
}
\date{February 2, 2018}
\begin{document}

\maketitle

\begin{abstract}
    We consider the entropic regularization of discretized optimal transport and propose to solve its optimality conditions via a logarithmic Newton iteration. We show a quadratic convergence rate and validate numerically that the method compares favorably with the more commonly used Sinkhorn--Knopp algorithm for small regularization strength. We further investigate numerically the robustness of the proposed method with respect to parameters such as the mesh size of the discretization.
\end{abstract}

\section{Introduction}
\label{sec:introduction}

The mathematical problem of optimal mass transport has a long history dating back to its introduction in \textcite{Monge1781}, with key contributions by \textcite{Kantorovich1942} and \textcite{Kantorovich1957}. It has recently received increased interest due to numerous applications in machine learning; see, e.g., the recent overview of \textcite{Kolouri2017} and the references therein. In a nutshell, the (discrete) problem of optimal transport in its Kantorovich form is to compute for given mass distributions $\a$ and $\b$ with equal mass a transport plan, i.e., an assignment of how much mass of $\a$ at some point should be moved to another point to match the mass in $\b$. This should be done in a way such that some transport cost (usually proportional to the amount of mass and dependent on the distance) is minimized. This leads to a linear optimization problem which has been well studied, but its application in machine learning has been problematic due to large memory requirement and long run time. Recently, \textcite{Cuturi2013} proposed a method that overcomes the memory requirement by so-called entropic regularization that has found broad applications; see, e.g., \textcite{Carlier2017, Cuturi2014, Frogner2015}. The resulting iteration resembles the so-called Sinkhorn--Knopp method from \textcite{Sinkhorn1967} for matrix balancing and allows for a simple and efficient implementation.

\subsection{Our contribution}
\label{sec:contribution}

In this work, we show that the Sinkhorn--Knopp method can be viewed as an approximate Newton method and derive a full Newton method for entropically regularized optimal transport problems that is demonstrated to perform significantly better for small entropic regularization parameters. Here, compared to \textcite{Cuturi2013}, the key idea is to apply a logarithmic transform to the variables.

This paper is organized as follows. In \cref{sec:sinkhorn_newton_method}, we state the Kantorovich formulation of optimal transport together with its dual which serves as the basis of the derived algorithm. Afterwards, we establish local quadratic convergence and discuss the relation of the proposed Newton method to the Sinkhorn--Knopp iteration. The performance and parameter dependence of the proposed method are illustrated with numerical examples in \cref{sec:numerical_examples}. \Cref{sec:proof} contains the proof of the key estimate for quadratic convergence, and \cref{sec:conclusion} concludes the paper.

\subsection{Notation}
\label{sec:notation}

In the following, $\one_{\n}$ represents the $\n$-dimensional vector with all ones and $\one_{\n, \m}$ refers to the $\n\times \m$ matrix with all ones. Moreover, $\Sigma_{\n} \coloneqq \{\a\in\R^{\n}_{+} : \one_{\n}^{\top}\a = 1\}$ denotes the probability simplex in $\R^{\n}_{+}$ whose elements are called \emph{probability vectors}, or equivalently, \emph{histograms}. For two histograms $\a\in\Sigma_{\n}$ and $\b\in\Sigma_{m}$,
\begin{equation}
    \label{eq:admissible_transport_plans}
    \U(\a, \b) \coloneqq \{\P\in\R^{\n\times \m}_{+} : \P\one_{\m} = \a, \ \P^{\top}\one_{\n} = \b\}
\end{equation}
is the set of admissible \emph{coupling matrices}. In the context of optimal transport, the elements of $\U(\a, \b)$ are also referred to as \emph{transport plans}.
Histograms $\a$ and $\b$ can be viewed as mass distributions,
and an entry $\P_{ij}$ of a transport plan $\P\in\U(\a, \b)$ can be interpreted as the amount of mass moved from $\a_i$ to $\b_j$.

We refer to the Frobenius inner product of two matrices $\P, \P'\in\R^{\n\times \m}$ as $\langle \P, \P' \rangle \coloneqq \sum_{ij}\P_{ij}\P'_{ij}$. At the same time, $\langle\a, \a'\rangle \coloneqq \sum_{i}\a_{i}\a_{i}'$ denotes the standard dot product of two vectors $\a, \a'\in\R^{\n}$. Finally, $\diag(\a)\in\R^{\n\times \n}$ is defined as the diagonal matrix with $\diag(\a)_{ii} \coloneqq \a_{i}$ and $\diag(\a)_{ij} \coloneqq 0$ for $i \neq j$, and $\a\odot\a'\coloneqq \diag(\a)\a'$ is the Hadamard product (i.e., the component-wise product) of $\a$ and $\a'$.

\section{Sinkhorn--Newton method}
\label{sec:sinkhorn_newton_method}

In this section we derive our Sinkhorn--Newton method. We start by introducing the problem of entropically regularized optimal transport in \cref{sec:problem_setting}. Afterwards, in \cref{sec:algorithm}, we present our approach, which is essentially applying Newton's method to the optimality system associated with the transport problem and its dual, before we discuss its local quadratic convergence in \cref{sec:convergence}. In \cref{sec:relation_sinkhorn_knopp}, we finally establish a connection between our Newton iteration and the Sinkhorn--Knopp type iteration introduced by \textcite{Cuturi2013}.

\subsection{Problem setting}
\label{sec:problem_setting}

Let $\a\in\Sigma_{\n}$ and $\b\in\Sigma_{\m}$ be given histograms together with a non-negative cost matrix $\C\in\R^{\n\times \m}$. The entropically regularized Kantorovich problem of optimal mass transport between $\a$ and $\b$ is
\begin{equation}
    \label{eq:kantorovich_entropically}
    \tag{P$_{\epsilon}$}
    \inf_{\P \in \U(\a, \b)} \ \langle\C, \P\rangle + \epsilon \langle\P, \log\P - \one_{\n,\m}\rangle,
\end{equation}
where the logarithm is applied componentwise to $\P$ and $\epsilon > 0$ is the regularization strength. The variables $\P_{ij}$ indicate how much of $\a_{i}$ ends up in $\b_{j}$, while $\C_{ij}$ is the corresponding transport cost per unit mass. Abbreviating $\K\coloneqq \exp(-\C / \epsilon)$, standard convex duality theory leads us to the dual problem
\begin{equation}
    \label{eq:dual_problem}
    \tag{D$_{\epsilon}$}
    \sup_{\f\in\R^{\n}, \g\in\R^{\m}} - \langle\a, \f\rangle - \langle\b, \g\rangle - \epsilon \langle \e^{-\f / \epsilon}, \K \e^{-\g / \epsilon}\rangle,
\end{equation}
where $\f$ and $\g$ are the dual variables and the exponential function is applied componentwise. The problems \eqref{eq:kantorovich_entropically} and \eqref{eq:dual_problem} are linked via the optimality conditions
\begin{subequations}
    \begin{align}
        \P &= \diag(\e^{-\f / \epsilon})\K\diag(\e^{-\g / \epsilon})\label{eq:oc_primal_dual}\\
        \a &= \diag(\e^{-\f / \epsilon})\K\e^{-\g / \epsilon}\label{eq:oc_source_mass_conservation}\\
        \b &= \diag(\e^{-\g / \epsilon})\K^{\top}\e^{-\f / \epsilon}\label{eq:oc_sink_mass_conservation}.
    \end{align}
    \label{eq:oc}%
\end{subequations}
The first condition \eqref{eq:oc_primal_dual} connects the optimal transport plan with the dual variables. The conditions \eqref{eq:oc_source_mass_conservation} and \eqref{eq:oc_sink_mass_conservation} simply reflect the feasibility of $\P$ for \eqref{eq:kantorovich_entropically}, i.e., for the mass conservation constraints in \eqref{eq:admissible_transport_plans}.

\subsection{Algorithm}
\label{sec:algorithm}

Finding dual vectors $\f$ and $\g$ that satisfy \eqref{eq:oc_source_mass_conservation} and \eqref{eq:oc_sink_mass_conservation} is equivalent to finding a root of the function
\begin{equation}
    \label{eq:oc_function}
    F(\f, \g) \coloneqq
    \begin{pmatrix*}[l]
        \a - \diag(\e^{-\f / \epsilon})\K\e^{-\g / \epsilon}\\
        \b - \diag(\e^{-\g / \epsilon})\K^{\top}\e^{-\f / \epsilon}
    \end{pmatrix*},
\end{equation}
i.e., to solving $F(\f, \g) =  0$. A Newton iteration for this equation is given by
\begin{equation}
    \label{eq:newton_iteration}
    \begin{pmatrix}
        \f^{k+1}\\ \g^{k+1}
    \end{pmatrix}
    =
    \begin{pmatrix}
        \f^{k}\\ \g^{k}
    \end{pmatrix}
    -
    \J_{F}(\f^{k}, \g^{k})^{-1}F(\f^{k}, \g^{k}).
\end{equation}
The Jacobian matrix of $F$ is
\begin{equation}
    \label{eq:jacobian}
    \J_{F}(\f, \g) = \frac{1}{\epsilon}
    \begin{bmatrix}
        \diag(\P\one_{m}) &\P\\
        \P^{\top} &\diag(\P^{\top}\one_{\n})
    \end{bmatrix},
\end{equation}
where we used \eqref{eq:oc_primal_dual} to simplify the notation. Performing the Newton step \eqref{eq:newton_iteration} requires finding a solution of the linear equation system
\begin{equation}
    \label{eq:newton_step_les}
    \J_{F}(\f^{k}, \g^{k})\left(
        \begin{matrix}
            \delta\f\\ \delta\g
    \end{matrix}\right)
    = -F(\f^{k}, \g^{k}).
\end{equation}
The new iterates are then given by
\begin{subequations}
    \begin{align}
        \f^{k+1} &= \f^{k} + \delta\f\\
        \g^{k+1} &= \g^{k} + \delta\g.
    \end{align}
    \label{eq:dual_variables_update}%
\end{subequations}
If one is only interested in the optimal transport plan, then it is actually not necessary to keep track of the dual iterates $\f^{k}$ and $\g^{k}$ after initialization (in our subsequent experiments, we use $\f^{0} = \g^{0} =  0$ and hence, $\P^{0} = \K$). This is true because \eqref{eq:newton_step_les} can be expressed entirely in terms of
\begin{equation}
    \label{eq:p_k}
    \P^{k} \coloneqq \diag(\e^{-\f^{k} / \epsilon})\K\diag(\e^{-\g^{k} / \epsilon}),
\end{equation}
and thus, using \eqref{eq:dual_variables_update} and \eqref{eq:p_k}, we obtain the multiplicative update rule
\begin{equation}
    \label{eq:primal_variables_update}
    \begin{aligned}[b]
        \P^{k + 1} &= \diag(\e^{-[\f^{k} + \delta\f] / \epsilon})\K\diag(\e^{-[\g^{k} + \delta\g] / \epsilon})\\
                   &= \diag(\e^{-\delta\f / \epsilon})\P^{k}\diag(\e^{-\delta\g / \epsilon}).
    \end{aligned}
\end{equation}
In this way, we obtain an algorithm which only operates with primal variables, see \cref{alg:sinkhorn-newton-primal}.
In applications where the storage demand for the plans $\P^{k}$ is too high and one is only interested in the optimal value, there is another form which does not form the plans $\P^{k}$, but only the dual variables $\f^{k}$ and $\g^{k}$ and which can basically operate matrix-free. We sketch it as \cref{alg:sinkhorn-newton-dual} below.

\begin{algorithm}[tb]
    \caption{Sinkhorn-Newton method in primal variable}\label{alg:sinkhorn-newton-primal}
    \begin{algorithmic}[1]
        \STATE \textbf{Input:}  $\a\in\Sigma_{n}$, $\b\in\Sigma_{m}$, $\C\in\R^{n\times m}$
        \STATE \textbf{Initialize:} $\P^{0} = \exp(-\C /\epsilon)$, set $k=0$ 
        \REPEAT
        \STATE Compute approximate histograms
        \[
            \a^{k} = \P^{k}\one_{m},\quad \b^{k} = (\P^{k})^{\top}\one_{n}.
        \]
        \STATE Compute updates $\delta\f$ and $\delta\g$ by solving
        \[
            \frac{1}{\epsilon}
            \begin{bmatrix}
                \diag(\a^{k}) &\P^{k}\\
                (\P^{k})^{\top} &\diag(\b^{k})
            \end{bmatrix}
            \begin{bmatrix}
                \delta\f\\\delta\g
            \end{bmatrix}
            = 
            \begin{bmatrix}
                \a^{k}-\a\\
                \b^{k}-\b
            \end{bmatrix}.
        \]
        \STATE Update $\P$ by
        \[
            \P^{k + 1} = \diag(\e^{-\delta\f / \epsilon})\P^{k}\diag(\e^{-\delta\g / \epsilon}).
        \]
        \STATE $k\leftarrow k+1$
        \UNTIL{some stopping criteria fulfilled}
    \end{algorithmic}
\end{algorithm}

\begin{algorithm}[tb]
    \caption{Sinkhorn-Newton method in dual variables}\label{alg:sinkhorn-newton-dual}
    \begin{algorithmic}[1]
        \STATE \textbf{Input:}  $\a\in\Sigma_{n}$, $\b\in\Sigma_{m}$, function handle for application of $\K$ and $\K^{\top}$
        \STATE \textbf{Initialize:} $\a^{0}\in\R^{n}$, $\b^{0}\in\R^{m}$, set $k=0$
        \REPEAT
        \STATE Compute approximate histograms
        \[
            \a^{k} = \e^{-\f^{k} / \epsilon}\odot \K\e^{-\g^{k} /\epsilon},\quad \b^{k} = \e^{-\g^{k} / \epsilon}\odot \K^{\top}\e^{-\f^{k} /\epsilon}.
        \]
        \STATE Compute updates $\delta\f$ and $\delta\g$ by solving
        \[\M
            \begin{bmatrix}
                \delta\f\\\delta\g
            \end{bmatrix}
            = 
            \begin{bmatrix}
                \a^{k}-\a\\
                \b^{k}-\b
            \end{bmatrix}
        \]
        where the application of $\M$ is given by
        \[
            \M
            \begin{bmatrix}
                \delta\f\\\delta\g
            \end{bmatrix}
            = 
            \frac1\epsilon
            \begin{bmatrix}
                \a^{k}\odot\delta\f + \e^{-\f^{k}/\epsilon}\odot \K(\e^{-\g^{k}/\epsilon}\odot\delta\g)\\
                \b^{k}\odot\delta\g + e^{-\g^{k}/\epsilon}\odot \K^{\top}(\e^{-\f^{k}/\epsilon}\odot\delta\f)
            \end{bmatrix}.
        \]
        \STATE Update $\f$ and $\g$ by
        \[
            \f^{k + 1} = \f^{k} + \delta\f,\quad \g^{k + 1} = \g^{k} + \delta\g.
        \]
        \STATE $k\leftarrow k+1$
        \UNTIL{some stopping criteria fulfilled}
    \end{algorithmic}
\end{algorithm}

\subsection{Convergence and numerical aspects}
\label{sec:convergence}

In the following, we first argue that \eqref{eq:newton_step_les} is solvable. Then we show that the sequence of Newton iterates converges locally at a quadratic rate as long as the optimal transport plan satisfies $\P \geq c\cdot\one_{\n, \m}$ for some constant $c > 0$.

\begin{lemma}
    \label{lem:jacobian_sym_pos_def}
    For $\f\in\R^{\n}$ and $\g\in\R^{\m}$, the Jacobian matrix $\J_{F}(\f, \g)$ is symmetric positive semi-definite, and its kernel is given by
    \begin{equation}
        \label{eq:jacobian_kernel}
        \ker \left[\J_{F}(\f, \g)\right] = \spann\left\{
            \left(\begin{matrix*}[l]
                    \hphantom{-}\one_{\n}\\ -\one_{m}
            \end{matrix*}\right)      
        \right\}.
    \end{equation}
\end{lemma}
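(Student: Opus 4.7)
The plan is to reduce the claim to a single algebraic identity expressing the quadratic form associated to $\J_F(\f,\g)$ as a weighted sum of squares. Symmetry is immediate from the block structure, so the content of the lemma is positive semi-definiteness together with the exact description of the kernel.

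First I would fix arbitrary $\u\in\R^\n$, $\v\in\R^\m$ and compute
\begin{align*}
    \epsilon\begin{bmatrix}\u\\\v\end{bmatrix}^{\top}\J_F(\f,\g)\begin{bmatrix}\u\\\v\end{bmatrix}
    &= \u^{\top}\diag(\P\one_\m)\u + 2\u^{\top}\P\v + \v^{\top}\diag(\P^{\top}\one_\n)\v\\
    &= \sum_{i,j}\P_{ij}\u_i^2 + 2\sum_{i,j}\P_{ij}\u_i\v_j + \sum_{i,j}\P_{ij}\v_j^2\\
    &= \sum_{i,j}\P_{ij}(\u_i+\v_j)^2,
\end{align*}
where I have simply expanded the diagonal terms as $\sum_j\P_{ij}$ and $\sum_i\P_{ij}$. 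Since all entries of $\P=\diag(\e^{-\f/\epsilon})\K\diag(\e^{-\g/\epsilon})$ are strictly positive (both $\K=\exp(-\C/\epsilon)$ and the diagonal factors have strictly positive entries), every term in the sum is non-negative, which gives positive semi-definiteness.

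For the kernel, I would use the same identity: $(\u;\v)\in\ker\J_F(\f,\g)$ if and only if the quadratic form vanishes, which in view of $\P_{ij}>0$ is equivalent to $\u_i+\v_j=0$ for all $i,j$. This forces $\u_i=-\v_j$ to be independent of both indices, so $\u=t\one_\n$ and $\v=-t\one_\m$ for some $t\in\R$, proving the inclusion $\ker\J_F(\f,\g)\subseteq\spann\{(\one_\n;-\one_\m)\}$. The reverse inclusion is a direct check: for $\u=\one_\n$ and $\v=-\one_\m$ one has $\diag(\P\one_\m)\one_\n-\P\one_\m=0$ and analogously for the second block row.

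The only subtle point—and I would emphasize it explicitly—is the strict positivity of the entries of $\P$. Without it, the kernel could be strictly larger (any index pair with $\P_{ij}=0$ would drop a constraint $\u_i+\v_j=0$). Since $\K>0$ entrywise by definition and the diagonal factors $\e^{-\f/\epsilon}$, $\e^{-\g/\epsilon}$ are strictly positive for every $\f\in\R^\n$ and $\g\in\R^\m$, this positivity holds for all iterates, and no further hypothesis is required.
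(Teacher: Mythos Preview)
Your proof is correct and follows exactly the same approach as the paper: both compute the quadratic form as $\frac{1}{\epsilon}\sum_{ij}\P_{ij}(\u_i+\v_j)^2$ and read off positive semi-definiteness and the kernel from the strict positivity of the entries $\P_{ij}$. Your write-up is in fact a bit more explicit than the paper's (you spell out the intermediate steps, justify $\P_{ij}>0$, and verify the reverse kernel inclusion), but there is no substantive difference.
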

\begin{proof}
    The matrix is obviously symmetric. For arbitrary $\phii\in\R^{\n}$ and $\gammaa\in\R^{\m}$, we obtain from \eqref{eq:jacobian} that
    \begin{equation}
        \label{eq:jacobian_pos_semi_def}
        \left(\begin{matrix}
                \phii^{\top} &\gammaa^{\top}
        \end{matrix}\right)
        \J_{F}(\f, \g)
        \left(\begin{matrix}
                \phii\\ \gammaa
        \end{matrix}\right)
        =
        \frac{1}{\epsilon}\sum_{ij}\P_{ij}(\phii_{i} + \gammaa_{j})^{2} \geq 0,
    \end{equation}
    which holds with equality if and only if we have $\phii_{i} + \gammaa_{j} = 0$ for all $i, j$.
\end{proof}
Hence, the system \eqref{eq:newton_step_les} can be solved by a conjugate gradient (CG) method. To see that, recall that the CG method iterates on the orthogonal complement of the kernel as long as the initial iterate $(\delta\f^0,\delta\g^0)$ is chosen from this subspace, in this case with $\one_{\n}^{\top}\delta\f^{0} = \one_{m}^{\top}\delta\g^{0}$. Furthermore, the Newton matrix can be applied matrix-free in an efficient manner as soon as the multiplication with $\K = \exp(-\C/\epsilon)$ and its transpose can be done efficiently, see \cref{alg:sinkhorn-newton-dual}. This is the case, for example if $\C_{ij}$ only depends on $i-j$ and thus, multiplication with $\K$ amounts to a convolution. A cheap diagonal preconditioner is provided by the matrix
\begin{equation}
    \label{eq:preconditioner}
    \frac{1}{\epsilon}
    \begin{bmatrix}
        \diag(\P^{k}\one_{\n}) & 0\\
        0 &\diag([\P^{k}]^{\top}\one_{m})
    \end{bmatrix}.
\end{equation}

According to \textcite[Thm.~2.3]{Deuflhard2011}, we expect local quadratic convergence as long as
\begin{equation}
    \label{eq:newton_condition}
    \norm{\J_{F}(\y^{k})^{-1}[\J_{F}(\y^{k}) - \J_{F}(\etaa)](\y^{k} - \etaa)} \leq \omega \norm{\y^{k} - \etaa}^{2}
\end{equation}
holds for all $\etaa\in\R^{\n}\times\R^{m}$ and $k\in\N$, with an arbitrary norm and some constant $\omega > 0$ in a neighborhood of the solution. Here, we abbreviated $\y^{k} \coloneqq (\f^{k}, \g^{k})$.

\begin{theorem}
    \label{thm:newton_constant}
    For any $k\in\N$ with $\P_{ij}^{k} > 0$, \eqref{eq:newton_condition} holds in the $\ell_{\infty}$-norm for
    \begin{equation}
        \label{eq:newton_constant}
        \omega \leq (\e^{\frac1\epsilon}-1)\left(1 + 2\e^{\frac1\epsilon}\frac{\max\big\{\norm{\P^{k}\one_{m}}_{\infty}, \, \norm{[\P^{k}]^{\top}\one_{n}}_{\infty}\big\}}{\min_{ij}\P_{ij}^{k}}\right)
    \end{equation}
    when $\norm{\y^{k}-\etaa}_{\infty}\leq 1$.
\end{theorem}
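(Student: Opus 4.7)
The plan is to control the three ingredients that make up the left-hand side of \eqref{eq:newton_condition}. Write $w = \y^k - \etaa = (w^{\f}, w^{\g})$. The first ingredient is the elementary Lipschitz estimate $|\e^{-s/\epsilon} - 1| \leq (\e^{1/\epsilon}-1)|s|$ for $|s|\leq 1$, which extracts one factor of $\|w\|_\infty$ and is responsible for the leading $(\e^{1/\epsilon}-1)$ appearing in \eqref{eq:newton_constant}. The second is an explicit expansion of $[\J_F(\y^k) - \J_F(\etaa)]\,w$ in terms of the associated primal plans $\P^k$ and $\P^{\etaa}$. The third, and most delicate, is a bound on the inverse Jacobian in the $\ell_\infty$ operator norm restricted to the range of $\J_F(\y^k)$.

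For the second ingredient I would exploit that the Jacobian depends on $\y$ only through the plan and that
\[
\P^{\etaa}_{ij} = \K_{ij}\,\e^{-(\etaa^{\f}_i + \etaa^{\g}_j)/\epsilon} = \P^k_{ij}\,\e^{-(w^{\f}_i + w^{\g}_j)/\epsilon}.
\]
Writing $[\J_F(\y^k) - \J_F(\etaa)]\,w$ block by block, the $i$-th entry of the first block is $\frac{1}{\epsilon}\sum_j(\P^k_{ij} - \P^{\etaa}_{ij})(w^{\f}_i + w^{\g}_j)$, with an analogous expression for the second block. Factoring out $\P^k_{ij}$, applying the exponential Lipschitz bound to $1 - \e^{-(w^{\f}_i + w^{\g}_j)/\epsilon}$ (splitting the exponent as a product and using $\e^{-w^{\g}_j/\epsilon} \leq \e^{1/\epsilon}$), and using $\|w\|_\infty \leq 1$ yields entries bounded by $(\e^{1/\epsilon}-1)\,\e^{1/\epsilon}\,\|w\|_\infty^2$ times the corresponding row or column sum of $\P^k$. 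This produces the factor $\e^{1/\epsilon}$ and the $\max\{\|\P^k\one_{m}\|_\infty, \|(\P^k)^\top\one_{n}\|_\infty\}$ appearing in the claim.

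The main obstacle is the third ingredient: controlling $\|\xi\|_\infty$ in terms of $\|v\|_\infty$ whenever $\J_F(\y^k)\,\xi = v$ with $v$ orthogonal to the kernel identified in \cref{lem:jacobian_sym_pos_def}. In components the system reads $\frac{1}{\epsilon}\sum_j \P^k_{ij}(\phii_i + \gammaa_j) = v^{\a}_i$ and $\frac{1}{\epsilon}\sum_i \P^k_{ij}(\phii_i + \gammaa_j) = v^{\b}_j$, exhibiting the structure of a weighted bipartite Laplacian. Selecting indices $i^*, j^*$ where $(\phii, -\gammaa)$ attains its extremes and combining the corresponding equations to isolate differences of the form $\phii_{i^*} - \phii_{i'}$ and $\gammaa_{j^*} - \gammaa_{j'}$, the uniform lower bound $\P^k_{ij} \geq \min_{ij}\P^k_{ij} > 0$ on every weight then gives $\|\xi\|_\infty \leq C\,\|v\|_\infty$ with $C$ inversely proportional to $\min_{ij}\P^k_{ij}$. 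Assembling the three estimates and simplifying yields exactly the bound \eqref{eq:newton_constant}; the additive $1$ in the parenthesis of \eqref{eq:newton_constant} reflects that one of the terms produced when expanding the Jacobian difference does not pick up a row/column sum of $\P^k$ and can be bounded directly by $\|w\|_\infty$.
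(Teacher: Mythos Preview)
Your outline has the right three-step shape, but it will not produce the constant in \eqref{eq:newton_constant}, and your explanation of the additive ``$1$'' is incorrect. If you bound $\big\|[\J_F(\y^k)-\J_F(\etaa)]w\big\|_\infty$ directly as you describe (factor out $\P^k_{ij}$, bound the exponential difference, sum over $j$) you obtain a quantity of order $\tfrac1\epsilon\max\{\|\P^k\one_m\|_\infty,\|(\P^k)^\top\one_n\|_\infty\}\cdot(\e^{1/\epsilon}-1)\e^{1/\epsilon}\|w\|_\infty^2$, and then applying any $\ell_\infty$ bound on $\J_F(\y^k)^{-1}$ multiplies \emph{everything} by $\epsilon/\min_{ij}\P^k_{ij}$ (or worse). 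The result is $C\,(\e^{1/\epsilon}-1)\,M\,\|w\|_\infty^2$ with $M=\max/\min$; the additive ``$1$'' never appears. That ``$1$'' is not a leftover term that ``does not pick up a row/column sum''; it comes from a structural cancellation you have not identified. In the paper's argument one expands $(\e^{(\phii_i+\gammaa_j)/\epsilon}-1)(\phii_i+\gammaa_j)$ as a Taylor--binomial double sum $\sum_{k\geq 2}\sum_{l=0}^k\binom{k}{l}\tfrac{1}{(k-1)!\epsilon^k}\phii_i^l\gammaa_j^{k-l}$. The boundary terms $l=0$ and $l=k$ produce vectors of the form $\big(\sum_j\P_{ij}(\phii_i^k+\gammaa_j^k)\big)_i$ in the top block (and symmetrically below), which is \emph{exactly} $\epsilon\,\J_F(\y^k)\cdot(\phii^k,\gammaa^k)^\top$. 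Hence $\J_F(\y^k)^{-1}$ cancels on these terms and their contribution is $((\e^{\phii/\epsilon}-1)\phii,(\e^{\gammaa/\epsilon}-1)\gammaa)$, bounded by $(\e^{1/\epsilon}-1)\|w\|_\infty^2$ with no factor of $M$. This is precisely the sharp part of the bound, cf.\ the Remark following the theorem. Your crude estimate destroys this cancellation.

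For the inverse bound on the remaining terms ($1\leq l\leq k-1$), the paper does not use an extremal-index argument. Instead it regularizes the singular matrix $\A=\epsilon\,\J_F(\y^k)$ along its kernel, setting $\B=\A+\Delta\,\q\q^\top$ with $\q=(\one_n^\top,-\one_m^\top)^\top$ and $\Delta=\min_{ij}\P^k_{ij}$, and then applies Varah's diagonal-dominance bound $\|\B^{-1}\|_\infty\leq[\min_i(|\B_{ii}|-\sum_{j\neq i}|\B_{ij}|)]^{-1}$. With this choice of $\Delta$ every row of $\B$ is diagonally dominant by exactly $2\Delta$, giving $\|\B^{-1}\|_\infty\leq 1/(2\min_{ij}\P^k_{ij})$, which is where the factor $2$ in the denominator comes from. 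Your extremal-index sketch might in principle yield a comparable estimate, but as written it is too vague to assess, and in any case it is only applied \emph{after} you have already lost the cancellation above. (Minor point: your formula for $\P^\etaa$ has a sign error; since $\etaa=\y^k-w$ one has $\P^\etaa_{ij}=\P^k_{ij}\,\e^{+(w^\f_i+w^\g_j)/\epsilon}$.)
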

We postpone the proof of \cref{thm:newton_constant} to \cref{sec:proof}.
\begin{remark}
    In fact, one can show that necessarily $\omega\geq\e^{\frac1\epsilon}-1$.
    Indeed, if $\y^k-\eta=(\phii,0)\in\R^n\times\R^n$, then one can explicitly compute
    \begin{equation*}
        \J_{F}(\y^{k})^{-1}[\J_{F}(\y^{k}) - \J_{F}(\etaa)](\y^{k} - \etaa)
        =((\e^{\phii/\epsilon}-1)\phii,0),
    \end{equation*}
    where the exponential and the multiplication are pointwise (the calculation is detailed in the proof of \cref{thm:newton_constant}).
\end{remark}

Hence, if $(\f^{0}, \g^{0})$ is chosen sufficiently close to a solution of $F(\f, \g) =  0$, then the contraction property of Newton's method shows that the sequence of Newton iterates $(\f^{k}, \g^{k})$, and hence $\P^{k}$, remain bounded. If the optimal plan satisfies $\P^{*} \geq c\cdot\one_{\n, \m}$ for some $c > 0$, we can therefore expect local quadractic convergence of Newton's method.

\subsection{Relation to Sinkhorn--Knopp}
\label{sec:relation_sinkhorn_knopp}

Substituting $\u\coloneqq \e^{-\f / \epsilon}$ and $\v \coloneqq \e^{-\g / \epsilon}$ in \eqref{eq:oc} shows that the optimality system can be written equivalently as
\begin{subequations}
    \begin{align}
        \P &= \diag(\u)\K\diag(\v)\label{eq:oc_primal_dual_subst}\\
        \a &= \diag(\u)\K\v\label{eq:oc_source_mass_conservation_subst}\\
        \b &= \diag(\v)\K^{\top}\u\label{eq:oc_sink_mass_conservation_subst}.
    \end{align}
\end{subequations}
In order to find a solution of \eqref{eq:oc_source_mass_conservation_subst}--\eqref{eq:oc_sink_mass_conservation_subst}, one can apply the Sinkhorn--Knopp algorithm \cite{Sinkhorn1967} as recently proposed in \textcite{Cuturi2013}. This amounts to alternating updates in the form of
\begin{subequations}
    \begin{align}
        \u^{k+1} &\coloneqq \diag(\K\v^{k})^{-1}\a\label{eq:sinkhorn_1}\\
        \v^{k+1} &\coloneqq \diag(\K^{\top}\u^{k+1})^{-1}\b.\label{eq:sinkhorn_2}
    \end{align}
    \label{eq:sinkhorn_updates}%
\end{subequations}
In \eqref{eq:sinkhorn_1}, $\u^{k+1}$ is updated such that $\u^{k+1}$ and $\v^{k}$ solve \eqref{eq:oc_source_mass_conservation_subst}, and in the subsequent \eqref{eq:sinkhorn_2}, $\v^{k+1}$ is updated such that $\u^{k+1}$ and $\v^{k+1}$ form a solution of \eqref{eq:oc_sink_mass_conservation_subst}.

If we proceed analogously to \cref{sec:algorithm} and derive a Newton iteration to find a root of the function
\begin{equation}
    \label{eq:oc_function_subst}
    G(\u, \v) \coloneqq
    \begin{pmatrix*}[l]
        \diag(\u)\K\v - \a\\
        \diag(\v)\K^{\top}\u - \b
    \end{pmatrix*},
\end{equation}
then the associated Jacobian matrix is
\begin{equation}
    \label{eq:oc_function_jacobian_subst}
    \J_{G}(\u, \v) =
    \begin{pmatrix*}[l]
        \diag(\K\v) &\diag(\u)\K\\
        \diag(\v)\K^{\top} &\diag(\K^{\top}\u)
    \end{pmatrix*}.
\end{equation}
Neglecting the off-diagonal blocks in \eqref{eq:oc_function_jacobian_subst} and using the approximation
\begin{equation}
    \label{eq:oc_function_jacobian_subst_approx}
    \hat\J_{G}(\u, \v) =
    \begin{pmatrix*}[l]
        \diag(\K\v) & 0\\
        0 &\diag(\K^{\top}\u)
    \end{pmatrix*}
\end{equation}
to perform the Newton iteration
\begin{equation}
    \label{eq:newton_iteration_subst}
    \begin{pmatrix}
        \u^{k+1}\\ \v^{k+1}
    \end{pmatrix}
    =
    \begin{pmatrix}
        \u^{k}\\ \v^{k}
    \end{pmatrix}
    -
    \hat\J_{G}(\u^{k}, \v^{k})^{-1}G(\u^{k}, \v^{k})
\end{equation}
leads us to the parallel updates
\begin{subequations}
    \begin{align}
        \u^{k+1} &\coloneqq \diag(\K\v^{k})^{-1}\a\label{eq:sinkhorn_newton_1}\\
        \v^{k+1} &\coloneqq \diag(\K^{\top}\u^{k})^{-1}\b.\label{eq:sinkhorn_newton_2}
    \end{align}
    \label{eq:sinkhorn_updates_newton}%
\end{subequations}
Hence, we see that a Sinkhorn--Knopp step \eqref{eq:sinkhorn_updates} simply approximates one Newton step \eqref{eq:sinkhorn_updates_newton} by neglecting the off-diagonal blocks and replacing $\u^{k}$ by $\u^{k+1}$ in \eqref{eq:sinkhorn_newton_2}. In our experience, neither the Newton iteration for $G(\u,\v)= 0$ (which seems to work for the less general problem of matrix balancing; see \textcite{Knight2013}) nor the version of Sinkhorn--Knopp in which $\v^{k+1}$ is updated using $\u^{k}$ instead of $\u^{k+1}$ converge.

\section{Numerical examples}
\label{sec:numerical_examples}

We illustrate the performance of the Sinkhorn--Newton method and its behavior by several examples. 
We note that a numerical comparison is not straightforward as there a several possibilities to tune the method, depending on the structure at hand and on the specific goals.
As illustrated in \cref{sec:algorithm}, one could take advantage of fast applications of the matrix $\K$ or use less memory if one does not want to store $\P$ during the iteration.
Here we focus on the comparison with the usual (linearly convergent) Sinkhorn iteration.
Thus, we do not aim for greatest overall speed but for a fair comparison between the Sinkhorn-Newton method and the Sinkhorn iteration.
To that end, we observe that one application of the Newton matrix~\eqref{eq:jacobian} amounts to one multiplication with $\P$ and $\P^{\top}$ each, two coordinate-wise products and sums of vectors.
For one Sinkhorn iteration we need one multiplication with $\K$ and $\K^{\top}$ and two additional coordinate-wise operations.
Although \cref{alg:sinkhorn-newton-dual} looks a little closer to the Sinkhorn iteration, we still compare \cref{alg:sinkhorn-newton-primal}, as we did not exploit any of the special structure in $\K$ or $\P$.

All timings are reported using MATLAB (R2017b) implementations of the methods on an Intel Xeon E3-1270v3 (four cores at 3.5\,GHz) with 16\,GB RAM.
The code used to generate the results below can be downloaded from \url{https://github.com/dirloren/sinkhornnewton}.

In all our experiments, we address the case $\m = \n$ and the considered histograms are defined on equidistant grids $\{\x_{i}\}_{i = 1}^{\n} \subset [0, 1]^{d}$ with $d = 2$ (in \cref{sec:comparison_sinkhorn,sec:dependence_regularization_strength}) and $d = 1$ (in \cref{sec:dependence_problem_dimension}), respectively. Throughout, the cost is chosen as quadratic, i.e., $\C_{ij} \coloneqq \norm{\x_{i} - \x_{j}}_{2}^{2}$.

Our Sinkhorn--Newton method is implemented according to \cref{alg:sinkhorn-newton-primal} and using a preconditioned CG method. The iteration is terminated as soon as the maximal violation of the constraints $\norm{\a^{k} - \a}_{\infty}$ and $\norm{\b^{k} - \b}_{\infty}$ drops below some threshold. If applicable, the same termination criterion is chosen for the Sinkhorn method, which is initialized with $\u^{0} = \v^{0} = \one_{\n}$.

\subsection{Comparison with Sinkhorn--Knopp}
\label{sec:comparison_sinkhorn}

\begin{figure*}
    \begin{subfigure}{0.45\linewidth}
        \centering
        \begin{tikzpicture}

\begin{axis}[%
width=0.9\linewidth,
scale only axis,
xmin=0,
xmax=3500,
ymode=log,
ymin=1e-15,
ymax=100000,
yminorticks=true,
xmajorgrids,
ymajorgrids,
yminorgrids,
]
\addplot [color=PuOr-C, line width=1.0,dashed]
  table[row sep=crcr]{%
0	1.26500023669347\\
1	0.0136416427633268\\
7	0.0130477466727975\\
16	0.0115975142181455\\
29	0.010524172558557\\
33	0.00990584470607129\\
39	0.00895581965221305\\
51	0.0078128561576014\\
57	0.00758150352848728\\
64	0.00729291858269186\\
69	0.00680704398762324\\
77	0.00608352911908308\\
83	0.00579223586191635\\
99	0.00513523594238887\\
108	0.00477804855556539\\
114	0.00468003438485491\\
121	0.00447109934150242\\
155	0.00322641484025231\\
217	0.00217984338884465\\
259	0.00153062708291259\\
325	0.000844768460026495\\
356	0.000668209712519447\\
397	0.000512106853887997\\
495	0.000273776074097261\\
559	0.000172159263331388\\
688	6.46677170053247e-05\\
3342	9.97344568043809e-14\\
};
\addlegendentry{S-viol.}

\addplot [color=PuOr-F, line width=1.0,dashed]
  table[row sep=crcr]{%
0	0.221003494943333\\
1	0.0738956499856481\\
15	0.0722247853198756\\
43	0.0665589153895944\\
88	0.0556145493846667\\
114	0.0487661643396954\\
179	0.03414389392809\\
224	0.0254571764031574\\
507	0.0035198877160952\\
636	0.00132990065544687\\
1353	5.5397459539926e-06\\
3168	5.2847587417317e-12\\
3313	1.80587489406735e-12\\
3342	1.4651335700226e-12\\
};
\addlegendentry{S--cost}

\addplot [color=PuOr-M, line width=1.0,dashed]
  table[row sep=crcr]{%
0	501.805343098149\\
1	1.959323781483\\
41	1.97265712315129\\
100	1.94704827403278\\
144	1.85813143255746\\
169	1.74541152301852\\
194	1.58617349532564\\
225	1.37246072047158\\
263	1.11300092726056\\
304	0.859373000714589\\
442	0.330730990871225\\
521	0.186171860696127\\
674	0.0584289698344776\\
1504	0.000102522774035328\\
3168	3.10299501531485e-10\\
3320	1.00767285718505e-10\\
3342	8.60047865880075e-11\\
};
\addlegendentry{S--plan}

\addplot [color=PuOr-C, line width=1.0]
  table[row sep=crcr]{%
0	1.26500023669347\\
33	0.465723221403546\\
65	0.173496284370927\\
97	0.0733382380440359\\
130	0.0484468942952844\\
163	0.0387764838574161\\
195	0.743423412274306\\
228	3.03156337899316\\
292	0.412020107039968\\
324	0.156251763186458\\
357	0.084722987060963\\
423	0.112188146700435\\
455	0.0504921085159309\\
486	0.0279719626954247\\
519	0.0125683914010298\\
552	0.00978639172922093\\
584	0.00366388372340697\\
616	0.0010886657361573\\
648	0.000156563789277246\\
681	5.45807768968397e-06\\
712	7.54259303426155e-07\\
745	4.02140594366584e-07\\
777	1.3425066051119e-07\\
809	6.26825376482577e-08\\
840	2.4729718672145e-09\\
871	3.45846972067742e-09\\
903	4.72850075067336e-11\\
932	4.39773247332207e-10\\
961	1.12774088678619e-10\\
990	4.09443676879697e-11\\
1019	4.07144352954063e-11\\
1050	2.33918266701101e-12\\
1078	6.16950240894734e-12\\
1106	2.08352085584496e-12\\
1133	6.35979116592248e-13\\
1160	2.22020318796366e-13\\
1191	1.21864324187387e-14\\
};
\addlegendentry{N--viol.}

\addplot [color=PuOr-F, line width=1.0]
  table[row sep=crcr]{%
0	0.221003494943333\\
33	0.0348027939902073\\
65	0.0324450618254047\\
97	0.053723027247762\\
130	0.0545036215481336\\
163	0.0512610239201841\\
195	0.102772252958704\\
228	0.303322300524973\\
260	0.125708045645722\\
292	0.0853457011809427\\
324	0.084762965958885\\
390	0.108346197557021\\
423	0.103566226993953\\
455	0.0928402218371696\\
486	0.0548938086334201\\
519	0.0270596944110355\\
552	0.017259615680877\\
584	0.00608022463560978\\
616	0.00113234166210819\\
648	0.000141088297629181\\
681	4.12439914371139e-06\\
712	2.84491281549229e-06\\
745	1.94043427451092e-07\\
777	1.73574878387069e-07\\
809	4.4446452504561e-08\\
840	1.41289910882164e-08\\
871	1.11696647347976e-08\\
903	4.16690723592493e-10\\
932	1.27511422254288e-09\\
961	4.35363384232551e-10\\
990	1.89982876652743e-10\\
1019	5.31707178286225e-11\\
1050	7.55363827042933e-12\\
1078	1.27680921391281e-11\\
1106	2.5399821135249e-12\\
1133	1.37644062814229e-12\\
1160	7.94961318995097e-13\\
};
\addlegendentry{N--cost}

\addplot [color=PuOr-M, line width=1.0]
  table[row sep=crcr]{%
0	501.805343098411\\
65	69.5566567347726\\
97	27.4713159668588\\
130	13.1636645671118\\
163	8.05222141295019\\
195	15.153552014504\\
228	21.8117959463902\\
260	10.9070790253686\\
292	6.99271452463659\\
324	5.39051956935373\\
390	3.65126160566395\\
423	2.85298467844127\\
455	1.9683556749751\\
519	0.596396581274534\\
552	0.289386003502393\\
584	0.0923500097805397\\
616	0.0197102233736058\\
648	0.00451037073923236\\
681	0.000759257738626059\\
712	0.000198629767513873\\
745	5.87452864986108e-05\\
777	1.85240999305121e-05\\
809	7.1334435951558e-06\\
840	1.15924256544295e-06\\
871	9.41789691491836e-07\\
903	3.25010421756952e-08\\
932	1.23734109597959e-07\\
961	3.05843731699112e-08\\
990	1.28928431935794e-08\\
1019	7.19546431176695e-09\\
1050	5.53296017708501e-10\\
1078	1.10538883974399e-09\\
1106	2.6692296070361e-10\\
1133	1.18818428720556e-10\\
1160	6.51427977827055e-11\\
};
\addlegendentry{N--plan}

\end{axis}
\end{tikzpicture}%
        \caption{Errors over iterations (CG for Newton).}
        \label{fig:errors:its}
    \end{subfigure}
    \hspace{1cm}
    \begin{subfigure}{0.45\linewidth}
        \centering
        \input{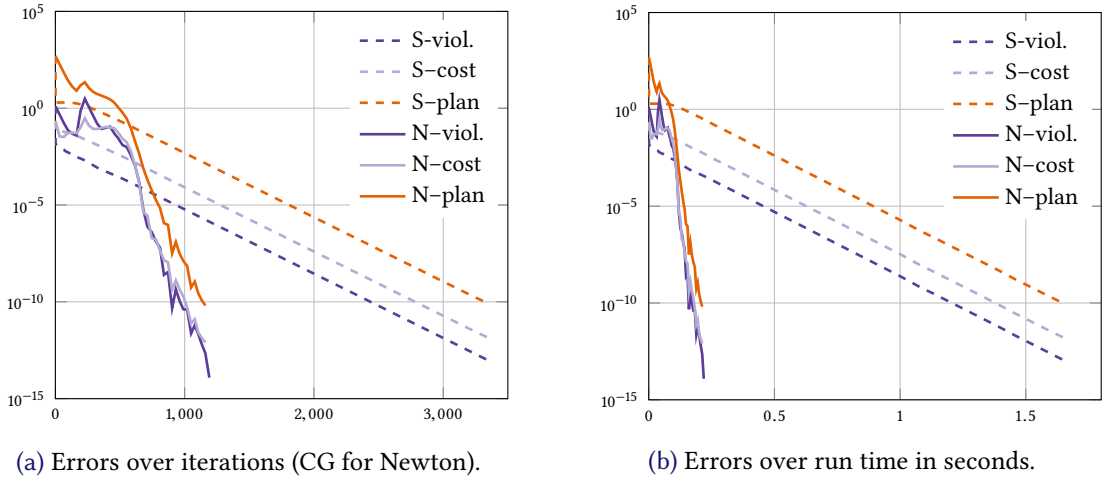}
        \caption{Errors over run time in seconds.}
        \label{fig:errors:time}
    \end{subfigure}
    \caption{Performance of Sinkhorn (S) and Newton (N) iterations measured by constraint violation (viol.), distance to optimal transport cost (cost) and distance to optimal transport plan (plan).}
    \label{fig:errors}
\end{figure*}

We first address the comparison of Sinkhorn--Newton with the classical Sinkhorn iteration. For this purpose, we discretize the unit square $[0, 1]^{2}$ using a $20\times 20$ equidistant grid $\{\x_{i} = (\x_{i1}, \x_{i2})\}_{i = 1}^{400}\subset [0, 1]^{2}$ and take
\begin{subequations}
    \begin{align}
        \label{eq:a_b_test}
        \tilde \a_{i} &\coloneqq \e^{-36([\x_{i1} - \frac13]^{2} - [\x_{i2} -\frac13]^{2})} + 10^{-1},\\
        \tilde \b_{j} &\coloneqq \e^{-9([\x_{j1} - \frac23]^{2} - [\x_{j2} -\frac23]^{2})} + 10^{-1},
    \end{align}
\end{subequations}
which are then normalized to unit mass by setting
\begin{equation}
    \label{eq:a_b_test_2}
    \a\coloneqq \frac{\tilde \a}{\sum_{i}\tilde\a_{i}} \quad \text{and} \quad \b\coloneqq \frac{\tilde \b}{\sum_{j}\tilde\b_{j}}.
\end{equation}
The entropic regularization parameter is set to $\epsilon \coloneqq 10^{-3}$ and in case of Sinkhorn-Newton, the CG method is implemented with a tolerance of $10^{-13}$ and a maximum number of $34$ iterations. Moreover, the threshold for the termination criterion is chosen as $10^{-13}$.

\Cref{fig:errors} shows the convergence history of the constraint violation for both iterations together with the error in the unregularized transport cost $\abs{\langle\C, \P^{k} - \P^{*}\rangle}$, where $\P^{*}$ denotes the final transport plan, and the error in the transport plan $\norm{\P^{k} - \P^{*}}_{1}$. In \cref{fig:errors:its}, we compare the error as a function of the iterations, where we take the total number of CG iterations for Sinkhorn--Newton to allow for a fair comparison (since both a Sinkhorn and a CG step have comparable costs, dominated by the two dense matrix--vector products $\K\v$, $\K^{\top}\u$ and $\P^{\top}\delta\f$, $\P\delta\g$, respectively). It can be seen clearly that with respect to all error measures, Sinkhorn converges linearly while Sinkhorn--Newton converges roughly quadratically, as expected, with Sinkhorn--Newton significantly outperforming classical Sinkhorn for this choice of parameters. The same behavior holds if the error is measured as a function of runtime; see \cref{fig:errors:time}.

\subsection{Dependence on the regularization strength}
\label{sec:dependence_regularization_strength}

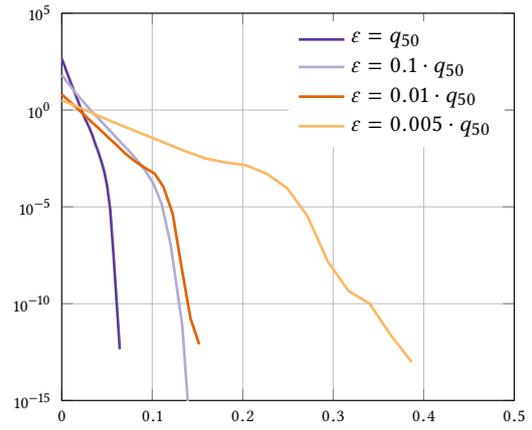
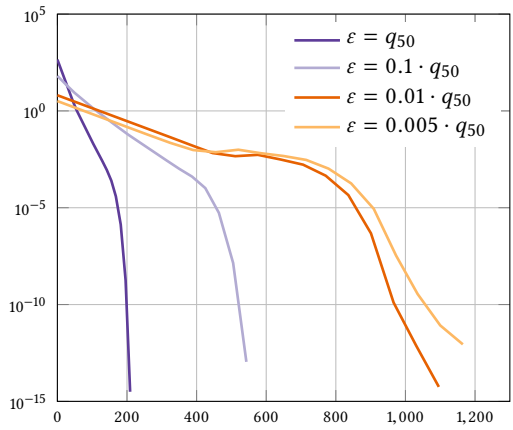
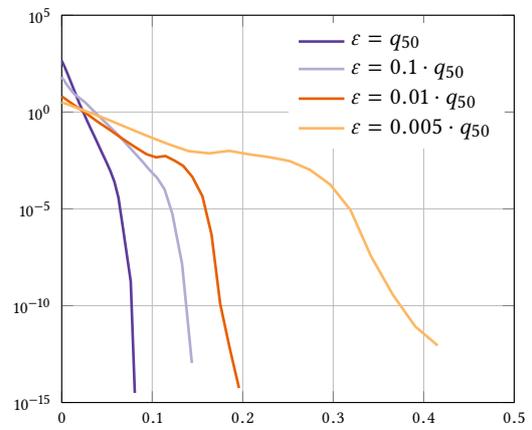
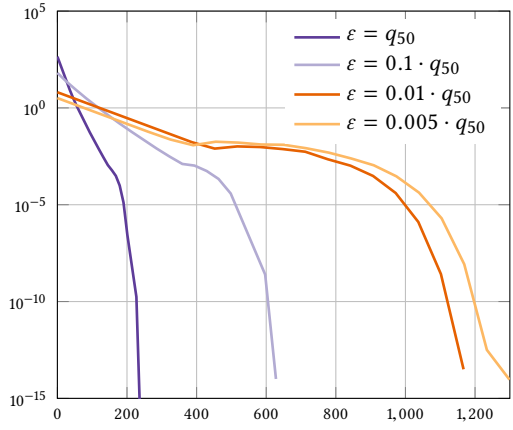
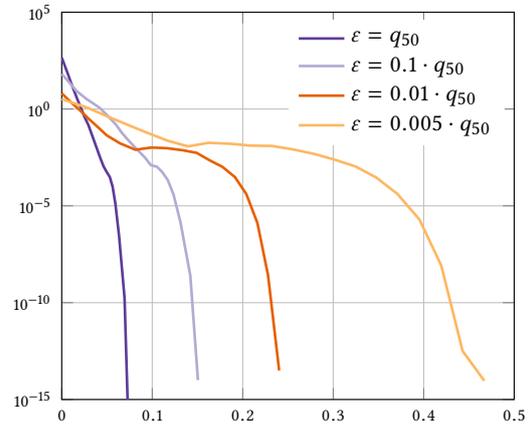
\begin{figure*}
    \begin{subfigure}{0.45\linewidth}
        \centering
        \begin{tikzpicture}

\begin{axis}[%
width=0.9\linewidth,
scale only axis,
xmin=0,
xmax=1300,
ymode=log,
ymin=1e-15,
ymax=100000,
xmajorgrids,
ymajorgrids,
]

\addplot [color=PuOr-C, line width=1.0]
  table[row sep=crcr]{%
0	446.812713703415\\
16	60.4691658760494\\
52	1.10712360119674\\
62	0.407025604817783\\
98	0.0198755617484343\\
111	0.00706229860217164\\
124	0.00236813165663525\\
137	0.00068128721986078\\
150	0.000128083251513789\\
165	7.50475365232052e-06\\
180	2.93545871284533e-08\\
212	4.45500407397775e-13\\
};
\addlegendentry{$\epsilon=q_{50}$}

\addplot [color=PuOr-F, line width=1.0]
  table[row sep=crcr]{%
0	64.3998251532039\\
23	23.6911090335263\\
47	8.71520930624603\\
72	3.20588370980916\\
101	1.17911615954804\\
131	0.433510239570681\\
163	0.159217675342638\\
229	0.0211952963272121\\
265	0.00755041411508417\\
301	0.00255566906037838\\
338	0.000763177151154049\\
375	0.0001642101913696\\
415	1.37329645205837e-05\\
457	1.10296283532223e-07\\
500	6.86589088534585e-12\\
540	6.93889390390763e-18\\
};
\addlegendentry{$\epsilon=0.1\cdot q_{50}$}

\addplot [color=PuOr-M, line width=1.0]
  table[row sep=crcr]{%
0	6.43927096164695\\
390	0.015619479542498\\
453	0.00564346945786581\\
511	0.00216214591580734\\
576	0.0010428575664782\\
641	0.000524864356284161\\
706	0.000107486178602209\\
771	4.27832503711571e-06\\
836	8.89349862265446e-09\\
901	1.73537269623763e-11\\
965	8.00775244724732e-13\\
};
\addlegendentry{$\epsilon=0.01\cdot q_{50}$}

\addplot [color=PuOr-J, line width=1.0]
  table[row sep=crcr]{%
0	3.21965901909914\\
390	0.00786206430263732\\
455	0.00323988921060775\\
520	0.00199048019734671\\
585	0.00141668360609476\\
650	0.000514392395974412\\
715	9.42268195574766e-05\\
780	3.37950450264056e-06\\
845	1.45093637261147e-08\\
910	4.43080668075645e-10\\
974	1.00074010276466e-10\\
1039	2.37227490255947e-12\\
1102	9.65889694615072e-14\\
};
\addlegendentry{$\epsilon=0.005\cdot q_{50}$}

\end{axis}
\end{tikzpicture}%
        \caption{$\gamma=0.5$ (CG iterations)}
        \label{fig:mnist:05:its}
    \end{subfigure}
    \hspace{1cm}
    \begin{subfigure}{0.45\linewidth}
        \centering
        \begin{tikzpicture}

\begin{axis}[%
width=0.9\linewidth,
scale only axis,
xmin=0,
xmax=0.5,
ymode=log,
ymin=1e-15,
ymax=100000,
xmajorgrids,
ymajorgrids,
]
\addplot [color=PuOr-C, line width=1.0]
  table[row sep=crcr]{%
3.50000000004513e-05	446.812713703421\\
0.00358300000000078	164.372948765068\\
0.00700900000000004	60.4691658760507\\
0.0141960000000001	8.1832524166697\\
0.017887	3.01018769940181\\
0.0213479999999997	1.10712360119677\\
0.0252090000000003	0.407025604817795\\
0.0295939999999995	0.149474380605369\\
0.0335900000000002	0.0547277561820941\\
0.0366689999999998	0.0198755617484341\\
0.0403140000000004	0.00706229860217152\\
0.0435870000000005	0.00236813165663521\\
0.0468910000000005	0.000681287219860772\\
0.0500930000000004	0.000128083251513786\\
0.0535359999999994	7.50475365232064e-06\\
0.0572900000000001	2.9354587128454e-08\\
0.0641130000000008	4.4550040739777e-13\\
};
\addlegendentry{$\epsilon=q_{50}$}

\addplot [color=PuOr-F, line width=1.0]
  table[row sep=crcr]{%
4.09999999995136e-05	64.3998251532097\\
0.0143109999999993	8.71520930624646\\
0.0224179999999983	3.2058837098094\\
0.0304089999999988	1.17911615954814\\
0.0653910000000018	0.0211952963272138\\
0.0744590000000009	0.00755041411508336\\
0.0833739999999992	0.00255566906037844\\
0.0921400000000006	0.000763177151154143\\
0.101210999999999	0.000164210191369591\\
0.110492000000001	1.37329645205838e-05\\
0.120417	1.10296283532211e-07\\
0.133293999999999	6.86589088534636e-12\\
0.142499999999998	6.93889390390723e-18\\
};
\addlegendentry{$\epsilon=0.1\cdot q_{50}$}

\addplot [color=PuOr-M, line width=1.0]
  table[row sep=crcr]{%
3.50000000004513e-05	6.43927096164771\\
0.0102969999999996	2.36861317710938\\
0.0204199999999997	0.871102612999858\\
0.0302089999999993	0.320201291451154\\
0.0404459999999993	0.117541514715797\\
0.0503020000000003	0.0430019406141536\\
0.0605049999999991	0.0156194795424987\\
0.0702569999999998	0.00564346945786528\\
0.0815359999999998	0.00216214591580726\\
0.0918960000000002	0.00104285756647828\\
0.102637	0.000524864356284197\\
0.112401999999999	0.000107486178602214\\
0.122661000000001	4.2783250371152e-06\\
0.132159	8.89349862265472e-09\\
0.142314000000001	1.73537269623769e-11\\
0.151939	8.00775244724771e-13\\
};
\addlegendentry{$\epsilon=0.01\cdot q_{50}$}

\addplot [color=PuOr-J, line width=1.0]
  table[row sep=crcr]{%
3.00000000006406e-05	3.21965901909927\\
0.0222960000000008	1.18418579073643\\
0.0437759999999994	0.435379904238424\\
0.0660939999999997	0.15991789594693\\
0.0893420000000003	0.0586037900705452\\
0.112538000000001	0.0213931271383226\\
0.135370999999999	0.0078620643026368\\
0.157971	0.00323988921060731\\
0.180308999999999	0.00199048019734721\\
0.203382	0.00141668360609454\\
0.226521	0.000514392395974461\\
0.248927999999999	9.42268195574827e-05\\
0.271417	3.37950450264015e-06\\
0.294231999999999	1.45093637261146e-08\\
0.317123	4.43080668075663e-10\\
0.340249999999999	1.00074010276479e-10\\
0.363573000000001	2.37227490255953e-12\\
0.386865	9.65889694615195e-14\\
};
\addlegendentry{$\epsilon=0.005\cdot q_{50}$}

\end{axis}
\end{tikzpicture}%
        \caption{$\gamma=0.5$ (run time in seconds)}
        \label{fig:mnist:05:time}
    \end{subfigure}

    \begin{subfigure}{0.45\linewidth}
        \centering
        \begin{tikzpicture}

\begin{axis}[%
width=0.9\linewidth,
scale only axis,
xmin=0,
xmax=1300,
ymode=log,
ymin=1e-15,
ymax=100000,
xmajorgrids,
ymajorgrids,
]

\addplot [color=PuOr-C, line width=1.0]
  table[row sep=crcr]{%
0	446.813179762451\\
45	3.01038442398048\\
55	1.10731929655964\\
103	0.0200803229169965\\
129	0.00260044528323108\\
142	0.000889971248782125\\
155	0.000249162327593543\\
168	3.86313398615028e-05\\
182	1.3571226096899e-06\\
196	1.79930626474641e-09\\
209	3.13475374130729e-15\\
};
\addlegendentry{$\epsilon=q_{50}$}

\addplot [color=PuOr-F, line width=1.0]
  table[row sep=crcr]{%
0	64.4002912122598\\
24	23.6914036520228\\
49	8.71544087770803\\
107	1.17931625280855\\
171	0.159415367587797\\
204	0.0585133503437811\\
313	0.00281862225942695\\
350	0.00102528724643555\\
387	0.000404073711722002\\
425	0.000103177609590683\\
464	5.44377841706495e-06\\
505	1.39525737768999e-08\\
543	1.10021692277519e-13\\
};
\addlegendentry{$\epsilon=0.1\cdot q_{50}$}

\addplot [color=PuOr-M, line width=1.0]
  table[row sep=crcr]{%
0	6.43973702069222\\
325	0.0433323885106849\\
388	0.016165687940146\\
446	0.00666367711086923\\
511	0.00458959606153572\\
576	0.0053836226409746\\
641	0.003103981152198\\
706	0.00167387624020947\\
771	0.000450609606132024\\
836	4.50148379913948e-05\\
901	4.7493336484936e-07\\
966	1.26322676277666e-10\\
1031	7.46862641176578e-13\\
1096	5.61183044478558e-15\\
};
\addlegendentry{$\epsilon=0.01\cdot q_{50}$}

\addplot [color=PuOr-J, line width=1.0]
  table[row sep=crcr]{%
0	3.22012507814346\\
260	0.0590187569072361\\
325	0.0221677958132257\\
390	0.00953982728742024\\
455	0.0073679265821345\\
520	0.0099043740603314\\
585	0.00651592699321286\\
650	0.00468329224144289\\
715	0.00294052015581286\\
779	0.00104699130969165\\
844	0.000180481925881224\\
909	8.80266693363834e-06\\
973	3.41490780887254e-08\\
1035	3.50427197676262e-10\\
1100	8.3694292815615e-12\\
1165	8.8110768681189e-13\\
};
\addlegendentry{$\epsilon=0.005\cdot q_{50}$}

\end{axis}
\end{tikzpicture}%
        \caption{$\gamma=0.1$ (CG iterations)}
        \label{fig:mnist:01:its}
    \end{subfigure}
    \hspace{1cm}
    \begin{subfigure}{0.45\linewidth}
        \centering
        \begin{tikzpicture}

\begin{axis}[%
width=0.9\linewidth,
scale only axis,
xmin=0,
xmax=0.5,
ymode=log,
ymin=1e-15,
ymax=100000,
xmajorgrids,
ymajorgrids,
]

\addplot [color=PuOr-C, line width=1.0]
  table[row sep=crcr]{%
4.09999999995136e-05	446.813179762464\\
0.00427999999999962	164.373243371778\\
0.00809500000000085	60.4693974109582\\
0.0155069999999995	8.18345223509655\\
0.0191759999999999	3.01038442398053\\
0.0274439999999991	0.407221216399037\\
0.0317070000000008	0.149670753232163\\
0.0361609999999999	0.0549264910877305\\
0.0448280000000008	0.00727990024520808\\
0.0493439999999996	0.0026004452832311\\
0.0537949999999991	0.000889971248782122\\
0.0582170000000009	0.000249162327593542\\
0.0626239999999996	3.86313398615018e-05\\
0.0672779999999999	1.35712260968988e-06\\
0.0762330000000002	1.79930626474645e-09\\
0.0807310000000001	3.13475374130734e-15\\
};
\addlegendentry{$\epsilon=q_{50}$}

\addplot [color=PuOr-F, line width=1.0]
  table[row sep=crcr]{%
3.59999999997029e-05	64.4002912122541\\
0.00614999999999988	23.691403652024\\
0.0135319999999997	8.71544087770699\\
0.0256989999999995	3.20609214835025\\
0.0348980000000001	1.17931625280859\\
0.0539919999999992	0.159415367587791\\
0.0634099999999993	0.0585133503437881\\
0.0804430000000007	0.00778418848375276\\
0.0890360000000001	0.00281862225942724\\
0.0970019999999998	0.00102528724643568\\
0.105319	0.00040407371172198\\
0.113605	0.000103177609590689\\
0.122273	5.44377841706511e-06\\
0.13302	1.39525737768986e-08\\
0.143818	1.10021692277529e-13\\
};
\addlegendentry{$\epsilon=0.1\cdot q_{50}$}

\addplot [color=PuOr-M, line width=1.0]
  table[row sep=crcr]{%
3.59999999997029e-05	6.43973702069209\\
0.0131870000000003	2.3689099383576\\
0.0266889999999993	0.871340793131513\\
0.0395129999999995	0.320427958003056\\
0.0534029999999994	0.11779107021747\\
0.0666460000000004	0.0433323885106838\\
0.0804650000000002	0.0161656879401443\\
0.0936730000000008	0.00666367711087083\\
0.104323000000001	0.00458959606153551\\
0.114559	0.00538362264097345\\
0.124620999999999	0.00310398115219814\\
0.134168000000001	0.00167387624020945\\
0.144337999999999	0.00045060960613205\\
0.155465	4.50148379913937e-05\\
0.165526	4.74933364849259e-07\\
0.175095000000001	1.26322676277657e-10\\
0.185313000000001	7.46862641176626e-13\\
0.195887000000001	5.61183044478496e-15\\
};
\addlegendentry{$\epsilon=0.01\cdot q_{50}$}

\addplot [color=PuOr-J, line width=1.0]
  table[row sep=crcr]{%
4.20000000005416e-05	3.22012507814364\\
0.0249880000000005	1.18449007150544\\
0.0494389999999996	0.435640243669278\\
0.0717110000000005	0.160205649903543\\
0.094322	0.0590187569072491\\
0.117671	0.0221677958132202\\
0.140421	0.00953982728741958\\
0.162767000000001	0.00736792658213309\\
0.185226	0.00990437406033128\\
0.207333	0.0065159269932131\\
0.229573	0.00468329224144304\\
0.251856999999999	0.00294052015581351\\
0.274194	0.00104699130969167\\
0.296661	0.000180481925881264\\
0.319056	8.80266693363726e-06\\
0.341950000000001	3.41490780887341e-08\\
0.366196	3.50427197676196e-10\\
0.390731000000001	8.36942928156281e-12\\
0.415374	8.81107686812045e-13\\
};
\addlegendentry{$\epsilon=0.005\cdot q_{50}$}

\end{axis}
\end{tikzpicture}%
        \caption{$\gamma=0.1$ (run time in seconds)}
        \label{fig:mnist:01:time}
    \end{subfigure}

    \begin{subfigure}{0.45\linewidth}
        \centering
        \begin{tikzpicture}

\begin{axis}[%
width=0.9\linewidth,
scale only axis,
xmin=0,
xmax=1300,
ymode=log,
ymin=1e-15,
ymax=100000,
xmajorgrids,
ymajorgrids,
]

\addplot [color=PuOr-C, line width=1.0]
  table[row sep=crcr]{%
0	446.813623582521\\
36	8.1836425469773\\
46	3.01057183812063\\
57	1.10750586206243\\
93	0.0551197257411801\\
119	0.00750799936551102\\
132	0.00284977627628515\\
145	0.00111216085073352\\
157	0.000590686783370153\\
168	0.000308606587096374\\
179	9.80539936490465e-05\\
190	1.29305207398175e-05\\
201	2.79143335012869e-07\\
227	1.74250707613005e-10\\
237	2.82759926584222e-16\\
};
\addlegendentry{$\epsilon=q_{50}$}

\addplot [color=PuOr-F, line width=1.0]
  table[row sep=crcr]{%
0	64.4007350323011\\
25	23.6916842225179\\
53	8.71566143106247\\
82	3.2062907285581\\
113	1.17950703647268\\
145	0.433896396042147\\
178	0.159605330734306\\
213	0.058708959997985\\
285	0.0080285130238959\\
322	0.00309817138973513\\
359	0.00127744050767311\\
394	0.00104948011013136\\
429	0.000551235898885522\\
463	0.000215291545174612\\
498	3.79422723384414e-05\\
597	2.4803055772433e-09\\
628	1.00587940754526e-14\\
};
\addlegendentry{$\epsilon=0.1\cdot q_{50}$}

\addplot [color=PuOr-M, line width=1.0]
  table[row sep=crcr]{%
0	6.44018084073573\\
387	0.0168783825012995\\
452	0.00791346358649756\\
517	0.0102663301759433\\
582	0.0095542315145237\\
647	0.00748703399462987\\
712	0.00546233394742603\\
777	0.00222835339266413\\
842	0.00104134647099997\\
907	0.000305913906152141\\
972	4.01757335889449e-05\\
1037	1.28771523774375e-06\\
1102	2.60940067210028e-09\\
1167	3.1752378504272e-14\\
};
\addlegendentry{$\epsilon=0.01\cdot q_{50}$}

\addplot [color=PuOr-J, line width=1.0]
  table[row sep=crcr]{%
0	3.22056889818561\\
260	0.0595444731510985\\
325	0.0232504736147584\\
390	0.0118253941105835\\
455	0.0179425681182147\\
520	0.0161222134242907\\
585	0.0128936069294315\\
650	0.0124797264988977\\
715	0.00825895977519991\\
780	0.0048897420366676\\
844	0.00245147826364658\\
909	0.00108985801714871\\
974	0.00029285409615693\\
1039	4.21522615045158e-05\\
1104	1.98450556654309e-06\\
1169	8.41118503232998e-09\\
1234	3.18006038169189e-13\\
1299	9.25301502086208e-15\\
};
\addlegendentry{$\epsilon=0.005\cdot q_{50}$}

\end{axis}
\end{tikzpicture}%
        \caption{$\gamma=0.01$ (CG iterations)}
        \label{fig:mnist:001:its}
    \end{subfigure}
    \hspace{1cm}
    \begin{subfigure}{0.45\linewidth}
        \centering
        \begin{tikzpicture}

\begin{axis}[%
width=0.9\linewidth,
scale only axis,
xmin=0,
xmax=0.5,
ymode=log,
ymin=1e-15,
ymax=100000,
xmajorgrids,
ymajorgrids,
]

\addplot [color=PuOr-C, line width=1.0]
  table[row sep=crcr]{%
3.79999999999825e-05	446.813623582506\\
0.00702399999999948	60.4696179013948\\
0.0103010000000001	22.2455070382884\\
0.0174730000000007	3.01057183812068\\
0.0213359999999998	1.10750586206243\\
0.0249989999999993	0.407408057354546\\
0.0294340000000002	0.149859268981801\\
0.0427149999999994	0.00284977627628507\\
0.0461270000000003	0.00111216085073349\\
0.0493710000000007	0.000590686783370155\\
0.0531079999999999	0.000308606587096369\\
0.0561959999999999	9.80539936490473e-05\\
0.0592649999999999	1.29305207398171e-05\\
0.0633499999999998	2.7914333501286e-07\\
0.0694090000000003	1.74250707613011e-10\\
0.073086	2.8275992658422e-16\\
};
\addlegendentry{$\epsilon=q_{50}$}

\addplot [color=PuOr-F, line width=1.0]
  table[row sep=crcr]{%
3.30000000001718e-05	64.4007350322966\\
0.00829399999999936	23.6916842225158\\
0.0161669999999994	8.71566143106183\\
0.0277550000000009	3.20629072855823\\
0.0416740000000004	1.1795070364728\\
0.0510920000000006	0.433896396042099\\
0.0597580000000004	0.159605330734294\\
0.0664879999999997	0.0587089599979859\\
0.0745819999999995	0.021617433621713\\
0.0833290000000009	0.00802851302389603\\
0.0920199999999998	0.00309817138973497\\
0.0983920000000005	0.00127744050767321\\
0.105193999999999	0.00104948011013146\\
0.111115	0.000551235898885543\\
0.117179	0.000215291545174584\\
0.123559	3.79422723384444e-05\\
0.131244000000001	1.3866826956959e-06\\
0.142111	2.48030557724321e-09\\
0.150518	1.00587940754515e-14\\
};
\addlegendentry{$\epsilon=0.1\cdot q_{50}$}

\addplot [color=PuOr-M, line width=1.0]
  table[row sep=crcr]{%
3.30000000001718e-05	6.4401808407347\\
0.00961900000000071	2.36919385027132\\
0.0200220000000009	0.871571650796927\\
0.0295480000000001	0.320654941944022\\
0.0397949999999998	0.11805877930159\\
0.0497359999999993	0.0437265293450478\\
0.0644419999999997	0.0168783825012962\\
0.0814000000000004	0.00791346358649711\\
0.0990520000000004	0.0102663301759435\\
0.116064	0.00955423151452316\\
0.133490999999999	0.00748703399462873\\
0.149095000000001	0.00546233394742675\\
0.16361	0.00222835339266417\\
0.177521	0.00104134647099987\\
0.191423	0.000305913906152085\\
0.204027999999999	4.0175733588936e-05\\
0.216272	1.28771523774402e-06\\
0.227966	2.6094006721003e-09\\
0.240247	3.17523785042794e-14\\
};
\addlegendentry{$\epsilon=0.01\cdot q_{50}$}

\addplot [color=PuOr-J, line width=1.0]
  table[row sep=crcr]{%
4.09999999995136e-05	3.22056889818624\\
0.0282260000000001	1.18478529714878\\
0.0494839999999996	0.435905739622146\\
0.0717859999999995	0.160527844087322\\
0.0944660000000006	0.0595444731511024\\
0.116908	0.0232504736147539\\
0.139663000000001	0.0118253941105824\\
0.162110999999999	0.017942568118215\\
0.186705	0.0161222134242874\\
0.208995	0.0128936069294299\\
0.232155000000001	0.0124797264988988\\
0.255412	0.00825895977520113\\
0.278421	0.00488974203666706\\
0.301339	0.00245147826364609\\
0.32452	0.00108985801714896\\
0.347458	0.000292854096156952\\
0.370729000000001	4.21522615045109e-05\\
0.395251	1.98450556654263e-06\\
0.419366	8.41118503232956e-09\\
0.443004999999999	3.18006038169116e-13\\
0.466989	9.25301502086031e-15\\
};
\addlegendentry{$\epsilon=0.005\cdot q_{50}$}

\end{axis}
\end{tikzpicture}%
        \caption{$\gamma=0.01$ (run time in seconds)}
        \label{fig:mnist:001:time}
    \end{subfigure}
    \caption{Constraint violation for different offsets $\gamma$ and different regularization parameters $\epsilon$.}
    \label{fig:mnist}
\end{figure*}

The second example addresses the dependence of the Sinkhorn--Newton method on the problem parameters. In particular, we consider the dependence on $\epsilon$ and on the minimal value of $\a$ and $\b$ (via the corresponding transport plans $\P$), since these enter into the convergence rate estimate \eqref{eq:newton_constant}. Here, we take an example which is also used in \textcite{Cuturi2013}: computing the transport distances between different images from the MNIST database, which contains $28\times 28$ images of handwritten digits. We consider these as discrete distributions of dimension $28^{2} = 784$ on $[0, 1]^{2}$, to which we add a small offset $\gamma$ before normalizing to unit mass as before. Here, the tolerance for both the Newton and the CG iteration is set to $10^{-12}$, and the maximum number of CG iterations is fixed at $66$. The entropic regularization parameter $\epsilon$ is chosen as multiples of the median of the cost (which is $q_{50} = 0.2821$ in this case).

\Cref{fig:mnist} shows the convergence history for different offsets $\gamma\in\{0.5, 0.1, 0.01\}$ and $\epsilon\in q_{50}\cdot\{1, 0.1,  0.01, 0.005\}$, where we again report the constraint violation both as a function of CG iterations and of the run time in seconds. Comparing \crefrange{fig:mnist:05:its}{fig:mnist:001:time}, we see that as $\epsilon$ decreases, an increasing number of CG iterations is required to achieve the prescribed tolerance. However, the convergence seems to be robust in $\epsilon$ at least for larger values of $\epsilon$ and only moderately deteriorate for $\epsilon \leq 0.01q_{50}$.

\subsection{Dependence on the problem dimension}
\label{sec:dependence_problem_dimension}

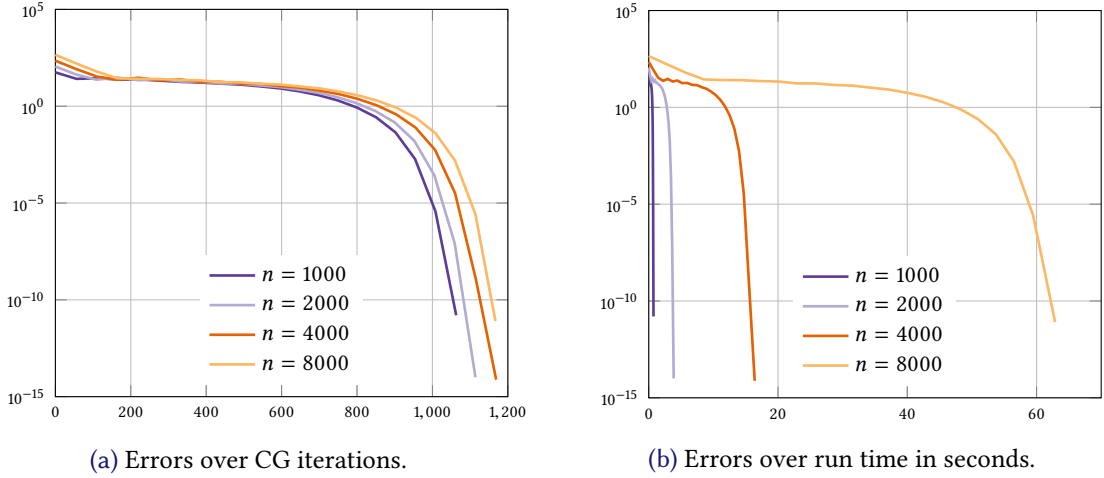
\begin{figure*}
    \begin{subfigure}{0.45\linewidth}
        \centering
        \begin{tikzpicture}

\begin{axis}[%
width=0.9\linewidth,
scale only axis,
xmin=0,
xmax=1200,
ymode=log,
ymin=1e-15,
ymax=100000,
xmajorgrids,
ymajorgrids,
legend style={at={(0.5,0.03)},anchor=south}
]
\addplot [color=PuOr-C, line width=1.0]
  table[row sep=crcr]{%
0	55.9587841728713\\
56	25.8381811258524\\
112	27.8733942008674\\
168	23.6413130411452\\
223	24.0897421379564\\
332	18.072302590235\\
386	16.6540811221653\\
438	14.5856126912665\\
491	13.014516263369\\
544	10.5439398807965\\
596	8.29535913718132\\
647	5.84418788173862\\
698	3.7110856808646\\
749	1.98418403845342\\
800	0.851707644208309\\
851	0.264502243434872\\
902	0.0445454614856351\\
954	0.0018584373150116\\
1008	3.70532734605951e-06\\
1063	1.59520174847177e-11\\
};
\addlegendentry{$n=1000$}

\addplot [color=PuOr-F, line width=1.0]
  table[row sep=crcr]{%
0	112.008682191131\\
55	43.3689598891171\\
109	23.4510129022239\\
165	30.377442628086\\
221	21.80704738245\\
276	25.0459836318234\\
330	19.0082824457469\\
383	18.2100526528908\\
436	15.2230845735031\\
489	14.4079851631448\\
594	9.7814494406789\\
645	7.22016730624712\\
696	4.96624263565133\\
747	2.94263769456753\\
798	1.45653164195543\\
849	0.560078202784891\\
900	0.145161199200274\\
952	0.0163383900311555\\
1005	0.000272998107264497\\
1059	8.33115195719806e-08\\
1114	1.01030295240889e-14\\
};
\addlegendentry{$n=2000$}

\addplot [color=PuOr-M, line width=1.0]
  table[row sep=crcr]{%
0	224.108492909422\\
55	83.3476706177233\\
110	33.9205307196888\\
166	23.297275816247\\
221	29.4400234501647\\
277	21.5038405133707\\
331	24.1728131777303\\
385	17.8111180640235\\
439	18.1909741707009\\
491	14.5443852623375\\
544	14.0547284688977\\
597	10.9710020533087\\
649	9.07365811298037\\
700	6.42478057543656\\
751	4.26668152375795\\
802	2.38382182227042\\
853	1.09550574501057\\
904	0.377746706814715\\
955	0.0794455223324442\\
1007	0.00553088737414716\\
1060	3.24367791553139e-05\\
1115	1.20482790411075e-09\\
1169	7.5495165674529e-15\\
};
\addlegendentry{$n=4000$}

\addplot [color=PuOr-J, line width=1.0]
  table[row sep=crcr]{%
0	448.308128051643\\
54	165.261717980084\\
109	62.2727177033941\\
164	27.523401639623\\
220	25.7395160310653\\
276	25.2922788389645\\
331	22.6853293750808\\
385	21.5054511617477\\
439	17.2263307463899\\
492	17.1736442508011\\
545	14.2808202259333\\
598	13.1440130825014\\
702	8.19471506931379\\
753	5.67343044092523\\
804	3.58128386078495\\
855	1.89023090223913\\
906	0.803201765077351\\
957	0.245966324463997\\
1008	0.0401310241405815\\
1060	0.00154897429071451\\
1114	2.62012637608518e-06\\
1167	8.14492917555754e-12\\
};
\addlegendentry{$n=8000$}

\end{axis}
\end{tikzpicture}%
        \caption{Errors over CG iterations.}
        \label{fig:discr:its}
    \end{subfigure}
    \hspace{1cm}
    \begin{subfigure}{0.45\linewidth}
        \centering
        \begin{tikzpicture}

\begin{axis}[%
width=0.9\linewidth,
scale only axis,
xmin=0,
xmax=70,
ymode=log,
ymin=1e-15,
ymax=100000,
xmajorgrids,
ymajorgrids,
legend style={at={(0.5,0.03)},anchor=south}
]
\addplot [color=PuOr-C, line width=1.0]
  table[row sep=crcr]{%
0.0994650000000004	55.9587841728642\\
0.167619	25.8381811258571\\
0.19608	27.8733942008715\\
0.223445	23.6413130411463\\
0.250576000000001	24.0897421379539\\
0.286593	20.9326317265704\\
0.312302000000001	18.0723025902389\\
0.337975	16.6540811221656\\
0.421109	10.5439398807966\\
0.454283999999999	8.29535913717959\\
0.486554	5.84418788173755\\
0.518494	3.71108568086399\\
0.568514	0.851707644208215\\
0.595421	0.264502243434928\\
0.621247	0.0445454614856433\\
0.647862	0.00185843731501189\\
0.683007999999999	3.70532734605966e-06\\
0.733174999999999	1.59520174847216e-11\\
};
\addlegendentry{$n=1000$}

\addplot [color=PuOr-F, line width=1.0]
  table[row sep=crcr]{%
0.000527999999999196	112.00868219113\\
0.205937	43.3689598891283\\
0.397618	23.4510129022205\\
0.581898000000001	30.3774426280885\\
0.767073	21.8070473824492\\
0.948765	25.0459836318286\\
1.130799	19.0082824457437\\
1.310923	18.2100526528879\\
1.492301	15.2230845735034\\
1.673394	14.4079851631427\\
2.032019	9.78144944067666\\
2.206815	7.22016730624798\\
2.380053	4.96624263565147\\
2.555748	2.94263769456684\\
2.729581	1.45653164195513\\
2.904222	0.560078202784914\\
3.078116	0.145161199200264\\
3.255555	0.0163383900311544\\
3.437053	0.000272998107264444\\
3.622724	8.33115195719714e-08\\
3.859601	1.01030295240889e-14\\
};
\addlegendentry{$n=2000$}

\addplot [color=PuOr-M, line width=1.0]
  table[row sep=crcr]{%
0.000105999999998829	224.108492909466\\
0.733015999999999	83.347670617711\\
1.466399	33.9205307196827\\
2.21231	23.2972758162491\\
2.945893	29.4400234501649\\
3.69269	21.5038405133686\\
4.420255	24.1728131777262\\
5.173928	17.8111180640205\\
5.957742	18.1909741706991\\
6.693902	14.5443852623355\\
7.454275	14.0547284689002\\
8.21413	10.9710020533103\\
8.953922	9.07365811298222\\
9.672459	6.42478057543692\\
10.388936	4.26668152375791\\
11.104599	2.38382182227005\\
11.816696	1.09550574501047\\
12.529631	0.377746706814752\\
13.244402	0.0794455223324631\\
13.968208	0.00553088737414843\\
14.710307	3.24367791553203e-05\\
15.475092	1.20482790411102e-09\\
16.39269	7.54951656745105e-15\\
};
\addlegendentry{$n=4000$}

\addplot [color=PuOr-J, line width=1.0]
  table[row sep=crcr]{%
0.000115000000000975	448.308128051592\\
2.824607	165.261717980062\\
5.694746	62.272717703387\\
8.556808	27.5234016396166\\
11.454647	25.7395160310687\\
14.368761	25.2922788389703\\
17.220995	22.6853293750767\\
20.100649	21.5054511617441\\
22.988253	17.2263307463868\\
25.86129	17.1736442507983\\
28.69906	14.2808202259343\\
31.591769	13.144013082499\\
37.215354	8.19471506931177\\
39.940635	5.67343044092542\\
42.728688	3.58128386078425\\
45.475313	1.89023090223895\\
48.220894	0.803201765077443\\
50.950886	0.24596632446396\\
53.723289	0.0401310241405907\\
56.517732	0.00154897429071443\\
59.470611	2.620126376085e-06\\
62.870142	8.14492917555754e-12\\
};
\addlegendentry{$n=8000$}

\end{axis}
\end{tikzpicture}%
        \caption{Errors over run time in seconds.}
        \label{fig:discr:time}
    \end{subfigure}
    \caption{Constraint violation for different mesh sizes $\n$}
    \label{fig:discr}
\end{figure*}

We finally address the dependence on the dimension of the problem. For this purpose, we discretize the unit interval $[0, 1]$ using $\n$ equidistant points $\x_{i}\in [0, 1]$ and take
\begin{subequations}
    \begin{align}
        \tilde \a_{i} &= \e^{-100(\x_{i} - 0.2)^{2}} + \e^{-20\abs{\x_{i} - 0.4}} + 10^{-2},\\
        \tilde \b_{j} &= \e^{-100(\x_{j} - 0.6)^{2}} + 10^{-2},
    \end{align}%
\end{subequations}
which are again normalized to unit mass to obtain $\a$ and $\b$. The regularization parameter is fixed at $\epsilon = 10^{-3}$. Moreover, the inner and outer tolerances are here set to $10^{-10}$, and the maximum number of CG iterations is coupled to the mesh size via $\lceil \n / 12\rceil$.

\Cref{fig:discr} shows the convergence behavior of Sinkhorn--Newton for $\n\in\{1000, 2000, 4000, 8000\}$. As can be seen from \cref{fig:discr:its}, the behavior is nearly independent of $\n$; in particular, the number of CG iterations required to reach the prescribed tolerance stays almost the same. (This is also true for the Newton method itself with $21, 22, 23$ and $23$ iterations.) Since each CG iteration involves two dense matrix--vector products with complexity ${\cal O}(\n^{2})$, the total run time scales quadratically; see \cref{fig:discr:time}.

\section{Proof of Theorem \labelcref{thm:newton_constant}}
\label{sec:proof}

For the sake of presentation, we restrict ourselves to the case $\m = \n$ here. However, in the end, we suggest how the proof can be generalized to the case $\m \neq \n$.

To estimate~\eqref{eq:newton_condition} and in particular $\J_{F}(\y^{k}) - \J_{F}(\etaa)$ for $\y^{k} = (\f^{k}, \g^{k})$ and $\etaa = (\alphaa,\betaa)\in\R^{\n}\times\R^{\n}$ we observe that
\begin{equation*}
    \J_{F}(\y^{k})-\J_{F}(\etaa) = \frac1\epsilon\left[\e^{\frac{-c_{ij}-\f^{k}_{i}-\g^{k}_{j}}\epsilon} - \e^{\frac{-c_{ij}-\alphaa_{i}-\betaa_{j}}\epsilon}\right]_{ij}
    = \left[\P_{ij}^{k}(1 - \e^{\frac{\f^{k}_{i}-\alphaa_{i} + \g^{k}_{j}-\betaa_{j}}\epsilon})\right]_{ij}.
\end{equation*}
To keep the notation concise, we abbreviate $\psii = (\phii, \gammaa) \coloneqq \y^{k} - \etaa$ and also write $\y$ and $\P$ for $\y^{k}$ and $\P^{k}$, respectively. Then, we compute
\begin{equation*}
    \begin{aligned}
        \J_{F}(\y)^{-1}[\J_{F}(\y) - \J_{F}(\etaa)](\y - \etaa)
        &=\ \J_{F}(\y)^{-1}
        \begin{bmatrix}
            \left(\sum_{j}\P_{ij}(\e^{(\phii_{i} + \gammaa_{j})/\epsilon}-1)(\phii_{i} + \gammaa_{j})/\epsilon\right)_{i}\\
            \left(\sum_{i}\P_{ij}(\e^{(\phii_{i} + \gammaa_{j})/\epsilon}-1)(\phii_{i} + \gammaa_{j})/\epsilon\right)_{j}    
        \end{bmatrix}\\
        &=\ \J_{F}(\y)^{-1}
        \begin{bmatrix}
            \left(\sum\limits_{j}\P_{ij}\sum\limits_{k=2}^\infty\frac1{(k-1)!\epsilon^k}\sum\limits_{l=0}^k{k\choose l}\phii_{i}^l\gammaa_{j}^{k-l}\right)_{i}\\
            \left(\sum\limits_{i}\P_{ij}\sum\limits_{k=2}^\infty\frac1{(k-1)!\epsilon^k}\sum\limits_{l=0}^k{k\choose l}\phii_{i}^l\gammaa_{j}^{k-l}\right)_{j}    
        \end{bmatrix}\\
        &=\ \sum\limits_{k=2}^{\infty}\sum\limits_{l=0}^{k}\binom{k}{l} \frac{1}{(k-1)!\epsilon^{k}} \J_{F}(\y)^{-1}
        \begin{bmatrix}
            \left(\sum_{j}\P_{ij}\phii_{i}^l\gammaa_{j}^{k-l}\right)_{i}\\
            \left(\sum_{i}\P_{ij}\phii_{i}^l\gammaa_{j}^{k-l}\right)_{j}    
        \end{bmatrix}
    \end{aligned}
\end{equation*}
where all exponents are applied componentwise.
Now we first treat only the summands for $l=0$ and $l=k$.
For those terms \eqref{eq:jacobian} immediately implies
\begin{equation*}
    \sum\limits_{k=2}^{\infty}\sum\limits_{l=0,k}\binom{k}{l}
    \frac{1}{(k-1)!\epsilon^{k}}
    \J_{F}(\y)^{-1}
    \begin{bmatrix}
        \left(\sum_{j}\P_{ij}(\phii_{i}^{k} + \gammaa_{j}^{k})\right)_{i}\\
        \left(\sum_{i}\P_{ij}(\phii_{i}^{k} + \gammaa_{j}^{k})\right)_{j}    
    \end{bmatrix}\label{eq:deuflhard-est-l=0-k}
    =\sum\limits_{k=2}^{\infty}\sum\limits_{l=0,k}
    \frac{1}{(k-1)!\epsilon^{k-1}}
    \begin{bmatrix}
        \phii^{k}\\\gammaa^{k}
    \end{bmatrix}
    =\begin{bmatrix}
        (\e^{\phii/\epsilon}-1)\phii\\(\e^{\gammaa/\epsilon}-1)\gammaa
    \end{bmatrix},
\end{equation*}
which has supremum norm bounded by $(\e^{\frac1\epsilon}-1)\norm{(\phii,\gammaa)}_{\infty}^2$ for all $\norm{(\phii,\gammaa)}_{\infty}\leq1$.
For all other summands (i.e. $1\leq l\leq k-1$), we write
\begin{equation*}
    \begin{bmatrix}
        \alphaa\\ \betaa
    \end{bmatrix}
    \coloneqq \frac{\binom{k}{l}}{(k-1)!\epsilon^{k}}\J_{F}(\y)^{-1}
    \begin{bmatrix}
        \left(\sum_{j}\P_{ij}\phii_{i}^{l}\gammaa_{j}^{k-l}\right)_{i}\\
        \left(\sum_{i}\P_{ij}\phii_{i}^{l}\gammaa_{j}^{k-l}\right)_{j}
    \end{bmatrix}.
\end{equation*}
Using \eqref{eq:jacobian} again, it follows that
\begin{equation*}
    \label{eq:A1}
    \underbrace{\begin{bmatrix}
            \diag(\P\one_{\n}) &\P\\
            \P^{\top} &\diag(\P^{\top}\one_{\n})
    \end{bmatrix}}_{\eqqcolon \A}
    \begin{bmatrix}
        \alphaa\\ \betaa
    \end{bmatrix} 
    = \frac{\binom{k}{l}}{(k-1)!\epsilon^{k-1}}
    \begin{bmatrix*}[l]
        \diag(\P\gammaa^{k-l})\phii^{l}\\
        \diag(\P^{\top}\phii^{l})\gammaa^{k-l}
    \end{bmatrix*},
\end{equation*}
and we aim to estimate $\norm{\alphaa}_{\infty}$ and $\norm{\betaa}_{\infty}$
by $\norm{\phii}_{\infty}$ and $\norm{\gammaa}_{\infty}$. By \cref{lem:jacobian_sym_pos_def}, the matrix $\A$ has a one-dimensional kernel spanned by $\q \coloneqq (\one_{\n}^{\top}, -\one_{\n}^{\top})^{\top}$, and a solution $(\alphaa, \betaa)$ in the orthogonal complement is also a solution to
\begin{equation*}
    \underbrace{(\A + \Delta\q\q^{\top})}_{\B}
    \begin{bmatrix}
        \alphaa\\ \betaa
    \end{bmatrix}
    =\frac{\binom{k}{l}}{(k-1)!\epsilon^{k-1}}
    \begin{bmatrix*}[l]
        \diag(\P\gammaa^{k-l})\phii^{l}\\
        \diag(\P^{\top}\phii^{l})\gammaa^{k-l}
    \end{bmatrix*}
\end{equation*}
for any $\Delta > 0$.
From \textcite{Varah1975}, we know that the $\ell_{\infty}$-norm of the inverse matrix of $\B$ is estimated by
\begin{equation}
    \textstyle
    \label{eq:A2}
    \norm{\B^{-1}}_{\infty} \leq \left[\min_{i}\left( \abs{\B_{ii}} - \sum_{j\neq i}\abs{\B_{ij}}  \right)\right]^{-1}.
\end{equation}
In this case, we calculate for $i = 1,\dots,\n$ that
\begin{equation*}
    \abs{\B_{ii}} - \sum_{\substack{1\leq j\leq 2\n\\ j\neq i}}\abs{\B_{ij}}
    = \sum\limits_{1\leq j\leq \n}\P_{ij} + \Delta - (\n - 1)\Delta - \sum\limits_{1\leq j\leq\n}\abs{\P_{ij} -\Delta}.
\end{equation*}
For any $\Delta \leq \min_{j}\P_{ij}$, this leads to
\begin{equation*}
    \abs{\B_{ii}} - \sum_{\substack{1\leq j\leq 2\n\\ j\neq i}}\abs{\B_{ij}} \leq 2\Delta.
\end{equation*}
Similarly, we get that
\begin{equation*}
    \abs{\B_{\n + i, \n + i}} - \sum_{\substack{1\leq j\leq 2\n\\ j\neq \n + i}}\abs{\B_{n + i, j}} \leq 2\Delta.
\end{equation*}
Choosing $\Delta \coloneqq \min_{ij}\P_{ij}$, we thus obtain that
\begin{equation*}
    \norm{\B^{-1}}_{\infty} \leq \left[2\min_{ij}\P_{ij}\right]^{-1}.
\end{equation*}
Using that
\begin{equation*}
    \norm{\diag(\P^{\top}\phii^l)\gammaa^{k-l}}\leq\norm{\P^{\top}\one_{n}}_{\infty}\norm{\phii}_{\infty}^{l}\norm{\gammaa}_{\infty}^{k-l},
\end{equation*}
and similarly for $\diag(\P\gammaa^{k-l})\phii^l$, finally gives
\begin{equation*}
    \norm{(\alphaa, \betaa)}_{\infty} \leq \frac{\binom{k}{l}}{(k-1)!\epsilon^{k-1}}M \norm{\phii}_{\infty}^{l}\norm{\gammaa}_{\infty}^{k-l}\label{eq:deuflhard-est-other-l}
\end{equation*}
with
\begin{equation*}
    M = \frac{\max\{\norm{\P\one_{n}}_{\infty}, \, \norm{\P^{\top}\one_{n}}_{\infty}\}}{2\min_{ij}\P_{ij}}.
\end{equation*}
Hence we obtain
\begin{multline*}
    \left\|\sum\limits_{k=2}^{\infty}\sum\limits_{l=1}^{k-1}\binom{k}{l} \frac{1}{(k-1)!\epsilon^{k}} \J_{F}(\y)^{-1}
    \begin{bmatrix}
        \left(\sum_{j}\P_{ij}\phii_{i}^l\gammaa_{j}^{k-l}\right)_{i}\\
        \left(\sum_{i}\P_{ij}\phii_{i}^l\gammaa_{j}^{k-l}\right)_{j}    
    \end{bmatrix}\right\|_{\infty}\\
    \begin{aligned}
        &\leq\ M\sum\limits_{k=2}^{\infty}\sum\limits_{l=1}^{k-1}\binom{k}{l} \frac{1}{(k-1)!\epsilon^{k-1}}\norm{\phii}_{\infty}^{l}\norm{\gammaa}_{\infty}^{k-l}\\
        &=\ M\bigg[\left(\e^{(\norm{\phii}_{\infty}+\norm{\gammaa}_{\infty})/\epsilon}-1\right)(\norm{\phii}_{\infty}+\norm{\gammaa}_{\infty})\\
        &\quad-\left(\e^{\norm{\phii}_{\infty}/\epsilon}-1\right)\norm{\phii}_{\infty}-\left(\e^{\norm{\gammaa}_{\infty}/\epsilon}-1\right)\norm{\gammaa}_{\infty}\bigg]\\
        &\leq\ 2\e^{\frac1\epsilon}M(\e^{\frac1\epsilon}-1)\norm{(\phii,\gammaa)}_{\infty}^2
    \end{aligned}
\end{multline*}
for all $\norm{(\phii,\gammaa)}_{\infty}\leq1$.
In summary we obtain
\begin{equation*}
    \norm{\J_{F}(\y)^{-1}[\J_{F}(\y) - \J_{F}(\etaa)](\y - \etaa)}_{\infty}
    \leq(1+2\e^{\frac1\epsilon}M)(\e^{\frac1\epsilon}-1)\norm{\y - \etaa}_{\infty}^2,
\end{equation*}
as desired

To generalize this to the case $\m\neq \n$, one can take $\q = (\Delta_{1}\one_{\n}, -\Delta_{2}\one_{\m})$ for $\Delta_{1}\neq -\Delta_{2}$ and $\Delta_{1}\Delta_{2} = \Delta$ and choose $\Delta_{1}$ to equilibrate the lower bounds for the first $\m$ and the last $\n$ rows of $\B$.

\section{Conclusion}
\label{sec:conclusion}

We have proposed a Newton iteration to solve the entropically regularized discrete optimal transport problem. Different from related Newton type approaches for matrix balancing, our method iterates on the logarithm of the scalings, which seems to be necessary for robust convergence in the optimal transport setting. Numerical examples show that our algorithm is a robust and efficient alternative to the more commonly used Sinkhorn--Knopp algorithm, at least for small regularization strength.

\printbibliography

\end{document}